\newtheorem{thm}{Theorem}[section]
\newtheorem{lem}[thm]{Lemma}
\theoremstyle{definition}
\theoremstyle{remark}
\numberwithin{equation}{section}
\newcommand{\addsupport}[1]{} 
\newcommand{\bb}[1]{\mathbb #1}
\newcommand{\ol}{\overline}
\renewcommand{\Re}{\operatorname{Re}}
\renewcommand{\Im}{\operatorname{Im}}
\newcommand{\<}{\langle}
\renewcommand{\>}{\rangle}
\newcommand{\e}{{\rm e}}
\newcommand{\bU}{\breve U}
\newcommand{\bal}{\breve\alpha}
\newcommand{\bsi}{\breve\psi}
\newcommand{\bz}{\breve z}
\newcommand{\CCC}{\eta}
\newcommand{\bsm}{\left(\begin{smallmatrix}}
\newcommand{\esm}{\end{smallmatrix}\right)}
\begin{document}

\title{Stability for the inverse resonance problem for the CMV operator}

\author{Roman Shterenberg}
\address{Department of Mathematics, University of Alabama at Birmingham,
Birmingham, AL 35226-1170, USA}
\email{shterenb@math.uab.edu}

\author{Rudi Weikard}
\address{Department of Mathematics, University of Alabama at Birmingham,
Birmingham, AL 35226-1170, USA}
\email{rudi@math.uab.edu}

\author{Maxim Zinchenko}
\address{Department of Mathematics and Statistics, University of New Mexico, Albuquerque, NM 87131-0001, USA}
\email{maxim@math.unm.edu}

\thanks{The authors were supported in part by NSF grants DMS-0901015, DMS-0800906, and DMS-0965411, respectively.}

\dedicatory{Dedicated with great pleasure to Fritz Gesztesy on the occasion of his 60th birthday.}

\date{\today}

\keywords{Inverse problem, eigenvalues and resonances, Jost function, CMV}
\subjclass[2010]{47B36, 34L40, 39A70}

\begin{abstract}
For the class of unitary CMV operators with super-exponentially decaying Verblunsky coefficients we give a new proof of the inverse resonance problem of reconstructing the operator from its resonances - the zeros of the Jost function. We establish a stability result for the inverse resonance problem that shows continuous dependence of the operator coefficients on the location of the resonances.
\end{abstract}

\maketitle

\section{Introduction}
In recent years there has been a substantial interest in inverse resonance problems for operators that arise in mathematical physics. An inverse resonance problems asks whether an operator is determined by its resonances. One is typically interested in reconstructing coefficients of an operator from a given set of resonances and in stability of the solution of the inverse resonance problem under small perturbation of the resonances. The case of Schr\"odinger and Jacobi operators have received most of the attention, see for instance \cite{MR2164835, MR2519659, MR2851909, MR2860642, IK12, MR2104289, MNSW10, MR2594334, MR2348728} and the references therein.

The inverse resonance problem for a class of unitary operators known as CMV operators has been considered in \cite{MR2647154}. These operators are represented by five-diagonal infinite unitary matrices giving rise to unitary operators on $\ell^2(\bb N_0)$. They are closely connected with trigonometric moment problems and with orthogonal polynomials and finite measures on the unit circle. The entries in a CMV matrix are determined by the Verblunsky coefficients, a sequence of complex numbers in the unit disk. For background information on CMV matrices we refer the reader to Simon's two-volume monograph \cite{MR2105088,MR2105089}.

In this paper we continue our study of the inverse resonance problem for unitary CMV operators associated with sequences of exponentially decaying Verblunsky coefficients. For these operators we review the concept of a resonance and give a simplified proof of the uniqueness result in \cite{MR2647154} that the location of all resonances determines the Verblunsky coefficients uniquely. The main result of the present study is a stability result: Suppose two CMV operators in our class have resonances $z_n$ and $\bz_n$, respectively. Further suppose that these resonances are respectively close to each other as long as they are not too large. Then the Verblunsky coefficients are respectively close to each other, too. We will now make these statements precise. Throughout this paper $\gamma>1$, $\CCC>0$, and $Q>1$ denote fixed constants. We define the class $B_0(\gamma,\CCC,Q)$ as the set of those sequences of Verblunsky coefficients $\alpha:\bb N\to\bb D$ satisfying the following two conditions:
\begin{enumerate}
  \item $|\alpha_k|\leq \CCC\exp(-k^\gamma)$ for all $k\in\bb N$ and
  \item $\prod_{j=1}^\infty (1-|\alpha_j|)\geq 1/Q$.
\end{enumerate}
We note that the above two conditions are essentially independent as the first one enforces the decay of $|\alpha_k|$ for large $k$ and the second one bounds $|\alpha_k|$ away from $1$ for small $k$.
Also, two sets $\{z_1,...,z_N\}$ and $\{\bz_1,...,\bz_N\}$ of complex numbers will be called respectively $\varepsilon$-close if $|z_n-\bz_n|<\varepsilon$ for all $n\in\{1,..., N\}$.

The following technical result required for our proof of stability for the inverse resonance problem is of independent interest. We will prove it in Section \ref{S:distresonances}.
\begin{thm}\label{noresonance}
Suppose $\alpha$ is a sequence of Verblunsky coefficients in $B_0(\gamma,\CCC,Q)$ and $U$ is the associated CMV operator. Then there is a positive number $\delta$ such that $U$ has no resonances in the disk $\{z:|z|<1+\delta\}$.
\end{thm}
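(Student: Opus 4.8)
The plan is to show that the Jost function $u$, which is analytic in a disk of radius strictly larger than $1$, does not vanish on the closed unit disk $\{|z|\le 1\}$, and then to promote this to nonvanishing on a slightly larger disk by a compactness argument. Since the resonances are by definition the zeros of $u$, the existence of the desired $\delta$ (depending on $\alpha$) is then immediate. First I would record two consequences of membership in $B_0(\gamma,\CCC,Q)$. Condition (1) forces $\alpha\in\ell^1(\bb N)$, since $\sum_k|\alpha_k|\le\CCC\sum_k\e^{-k^\gamma}<\infty$, and moreover it makes the Taylor coefficients of $u$ inherit the super-exponential decay of $\alpha$, so that $u$ continues analytically past the unit circle (in fact to an entire function). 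Condition (2) yields the uniform bound $\prod_j\rho_j\ge Q^{-1/2}$ with $\rho_j=(1-|\alpha_j|^2)^{1/2}$, using $\rho_j\ge(1-|\alpha_j|)^{1/2}$; this keeps the normalizing constant of $u$ bounded away from $0$ and $\infty$ and is what renders the argument quantitative, although for the present existence statement condition (1) alone already supplies the analyticity and the $\ell^1$ decay that the argument needs.

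Next I would establish the nonvanishing on the closed unit disk. Because $\alpha\in\ell^1(\bb N)$, Baxter's theorem (see \cite{MR2105088}) shows that the orthogonality measure of the associated orthonormal polynomials is purely absolutely continuous, $d\mu=w\,\frac{d\theta}{2\pi}$, with $w$ continuous and strictly positive on the unit circle; in particular $U$ has no eigenvalues. The Jost function is, up to the bounded nonvanishing factor above, the reciprocal Szegő function $D^{-1}$ of the weight $w$. As $D$ is outer it has no zeros in $\{|z|<1\}$, whence $u$ has none there either, and its boundary values satisfy $|D(\e^{i\theta})|^2=w(\theta)$, which is bounded and bounded away from $0$; consequently $u$ has neither zeros nor poles on the unit circle. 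Thus $|u(z)|>0$ for all $|z|\le 1$.

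Finally, $u$ is analytic and nonvanishing on the compact set $\{|z|\le 1\}$, so its zero set is closed, discrete, and disjoint from that compact set, hence at positive distance $\delta$ from it; therefore $U$ has no resonances in $\{z:|z|<1+\delta\}$. The main obstacle is the second step: identifying the Jost function, up to a bounded nonvanishing factor, with (the reciprocal of) the Szegő function and pinning down its boundary modulus in terms of $w$. Once this tie to Szegő theory is in place, the strict positivity and continuity of $w$ furnished by Baxter's theorem do the essential work, and the only remaining technical point is the routine verification that the super-exponential decay of $\alpha$ propagates to the Taylor coefficients of $u$, giving the needed analyticity beyond the unit circle.
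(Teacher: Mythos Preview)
Your argument produces some $\delta>0$ for each fixed $\alpha$, but it misses what the paper actually proves and what Theorem~\ref{main} needs: a single $\delta$ depending only on the class parameters $\gamma$, $\CCC$, $Q$. Theorem~\ref{main} applies ``the number $\delta$ introduced in the previous theorem'' to two operators $U$ and $\bU$ simultaneously and requires $\varepsilon<\delta/2$, so uniformity over $B_0(\gamma,\CCC,Q)$ is essential; your final compactness step (the zero set of an analytic function lies at positive distance from a compact set on which it does not vanish) cannot deliver this. The paper's proof is elementary and explicitly quantitative: it combines the uniform lower bound $|\Pi(z)|\ge 1/Q$ on $|z|=1$ (from $|\psi_0|^2\Re M=1$ together with $(1+|\Phi_0|)/(1-|\Phi_0|)\le Q^2$) with a uniform upper bound $|\Pi(z)|\le L$ on $|z|\le\e$ (from the growth estimate for $F$), bounds $|\Pi'|$ by Cauchy's estimate, and integrates along a radius to get $|\Pi(tz_0)|\ge 1/Q-(t-1)L/(\e-t)>0$ for $1\le t<1+\delta$ with the explicit choice $1+\delta=(QL+\e)/(QL+1)$. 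No Szeg\H{o} or Baxter theory enters.

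Your route through Baxter's theorem and the identification of the Jost function with a constant multiple of $D^{-1}$ is a legitimate alternative for the weaker $\alpha$-dependent statement, and it explains the absence of resonances in $\overline{\bb D}$ in spectral terms. Two cautions, however. First, to upgrade to the uniform statement you would still need a quantitative lower bound on $w$ over the class and a uniform bound on $\psi_0'$ near the circle, at which point you have essentially reproduced the paper's argument. Second, the identification step as you sketch it is circular: deducing $\psi_0=cD^{-1}$ from equality of boundary moduli requires knowing that $\psi_0$ is outer in $\bb D$, which already presupposes it is zero-free there; you should instead cite this as a known fact from OPUC theory (e.g.\ via the limit of the reversed polynomials $\varphi_n^*$) rather than derive it on the spot.
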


The main purpose of this paper is to prove the following theorem, which we will do in Section \ref{S:stability}.
\begin{thm}\label{main}
Suppose $\alpha$ and $\bal$ are two sequences of Verblunsky coefficients in $B_0(\gamma,\CCC,Q)$ and $U$ and $\bU$ are the associated CMV operators. Let $\delta$ be the number introduced in the previous theorem. Further suppose that, for two numbers $R>1$ and $\varepsilon\in(0,\delta/2)$, the resonances of $U$ and $\bU$ in the circle $|z|<R$, if there are any, are respectively $\varepsilon$-close. Then there is a constant $A_0$, depending only on $\gamma$, $\CCC$, and $Q$, such that
$$|\alpha_n-\bal_n|\leq A_0 (6Q^2)^{n} \bigg(\varepsilon+
\frac{(\log R)^{\gamma/(\gamma-1)}}{R}\bigg)$$
for all $n\in\bb N$.
\end{thm}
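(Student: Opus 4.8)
The plan is to move from the resonances to the Jost function and then from the Jost function to the Verblunsky coefficients, estimating the error incurred at each passage. Write $u$ and $\breve u$ for the Jost functions of $U$ and $\bU$. Under the hypothesis $|\alpha_k|\le\CCC\exp(-k^\gamma)$ these are entire functions whose nonzero zeros are precisely the resonances, and the super-exponential decay forces slow growth: one has $\log\max_{|z|=r}|u(z)|\le C(\log r)^{\gamma/(\gamma-1)}$ for large $r$, with $C$ depending only on $\gamma,\CCC,Q$. This growth bound is the source of the exponent $\gamma/(\gamma-1)$ in the conclusion. By Theorem \ref{noresonance}, both $u$ and $\breve u$ are zero-free on $\{|z|<1+\delta\}$, so in particular every resonance satisfies $|z_n|,|\bz_n|\ge1+\delta$; together with the normalization $u(0)=\prod_k\rho_k^{-1}$, which by condition~(2) lies in $[1,Q]$ since $\rho_k=\sqrt{1-|\alpha_k|^2}\ge1-|\alpha_k|$, this pins down the functions up to their zeros.

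First I would compare $u$ and $\breve u$ on the unit circle through their Hadamard products, which have genus zero because of the order-zero growth above; thus $u(z)=u(0)\prod_n(1-z/z_n)$ and similarly for $\breve u$. Since the resonances inside $|z|<R$ are paired and $\varepsilon$-close, the corresponding logarithmic factors differ by at most $C\varepsilon/|z_n|^2$, and summing against the zero-counting function, which by Jensen's formula and the growth bound satisfies $n(r)\le C(\log r)^{\gamma/(\gamma-1)}$, yields a convergent contribution of size $O(\varepsilon)$. The resonances with $|z_n|\ge R$ are not controlled individually, so for these I would use only the a priori estimate, bounding each term by $C/|z_n|$; the same counting bound then gives a tail of size $C(\log R)^{\gamma/(\gamma-1)}/R$. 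Combining the two, I obtain
$$\Big|\log\frac{u(z)}{\breve u(z)}\Big|\le C\Big(\varepsilon+\frac{(\log R)^{\gamma/(\gamma-1)}}{R}\Big),\qquad |z|=1.$$
Converting this into a bound for $|u(z)-\breve u(z)|$, and hence for the differences of their Taylor (equivalently Fourier) coefficients, is then routine. I expect this analytic comparison to be the main obstacle, since it requires turning the purely local hypothesis---closeness of zeros inside $|z|<R$ only---into a global estimate, with the infinitely many uncontrolled far resonances absorbed cleanly through the order-zero growth.

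The final passage is algebraic: recover $\alpha_n$ from $u$ by the recursion satisfied by the Jost solution, equivalently by a Schur-type layer-stripping algorithm in which $\alpha_0,\dots,\alpha_n$ are peeled off one coefficient at a time. Each of these $n$ steps divides by a partial product of the $\rho_k$, and since $\prod_k\rho_k\ge\prod_k(1-|\alpha_k|)\ge1/Q$ every such division amplifies the error by a factor controlled by $Q$; carrying this through together with the bounded combinatorial branching of the five-diagonal recursion accumulates the geometric factor $(6Q^2)^n$. Applying the reconstruction to both $u$ and $\breve u$ and inserting the uniform closeness established above then gives
$$|\alpha_n-\bal_n|\le A_0(6Q^2)^n\Big(\varepsilon+\frac{(\log R)^{\gamma/(\gamma-1)}}{R}\Big),$$
with $A_0$ depending only on $\gamma,\CCC,Q$. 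The remaining work is to make the recursion estimates explicit and to verify that all constants can be chosen uniformly over $B_0(\gamma,\CCC,Q)$.
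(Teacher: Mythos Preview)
Your outline tracks the paper's proof closely in its two endpoints---the Hadamard-product comparison (splitting into paired zeros inside $|z|<R$ and uncontrolled zeros outside, Jensen's formula for the counting function, the tail bound $\sum_{|z_n|\ge R}|z_n|^{-1}\le C(\log R)^{\gamma/(\gamma-1)}/R$) and the Schur layer-stripping at the end. But the middle step, which you call ``routine,'' is where the substance lies, and your sketch does not supply it.

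The issue is that the Verblunsky coefficients are \emph{not} recoverable by an algebraic recursion applied to the Jost function $u=\psi_0$ alone. The Schur algorithm is driven by the Schur function $\Phi_0$ (equivalently the $m$-function), and the link between $\psi_0$ and $\Phi_0$ is analytic, not algebraic: on $|z|=1$ one has $\Re M(z)=1/|\psi_0(z)|^2$, and then $\Im M$ is obtained from $\Re M$ by the Hilbert transform. The paper controls $\big|\,|\psi_0|^{-2}-|\breve\psi_0|^{-2}\big|$ on the unit circle, passes to $\|M-\breve M\|_2\le\sqrt2\,\|\Re M-\Re\breve M\|_2$ via unitarity of the Hilbert transform on $L^2$, and only then enters the Schur recursion. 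Your proposal instead bounds $|u-\breve u|$ and its Fourier coefficients, but those are not the inputs to any recursion that returns the $\alpha_n$; the Jost function encodes only one of the two components of the Jost solution, and the second component (equivalently $m$) must be recovered through the spectral identity above.

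A second, related gap: you invoke the normalization $u(0)=\prod_k\rho_k^{-1}$ as if it were common to both operators, but $C_0=\psi_0(0)$ and $\breve C_0=\breve\psi_0(0)$ depend on the (unknown) Verblunsky coefficients and are in general different. Bounding $|\log(u/\breve u)|$ on $|z|=1$ therefore requires first controlling $|C_0-\breve C_0|$. The paper does this via the identity $|\psi_0(0)|^2=\frac{1}{2\pi}\int_{-\pi}^{\pi}|\Pi(e^{it})|^{-2}\,dt$, which follows from $m(0)=1$; without this step your logarithmic comparison of $u$ and $\breve u$ does not follow from the comparison of the canonical products $\Pi$ and $\breve\Pi$.

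Finally, the factor $(6Q^2)^n$ in the paper arises from a precise Lipschitz estimate for the M\"obius map $S(w,z)=(z+\bar w)/(1+wz)$ together with Gauss's mean value theorem $|\alpha_k-\breve\alpha_k|\le\|\Phi_{k-1}-\breve\Phi_{k-1}\|_1$, giving $\|\Phi_k-\breve\Phi_k\|_1\le 6Q^2\|\Phi_{k-1}-\breve\Phi_{k-1}\|_1$. Your description in terms of ``dividing by partial products of the $\rho_k$'' and ``combinatorial branching of the five-diagonal recursion'' does not correspond to this mechanism and would not obviously produce a clean geometric constant.
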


Theorem \ref{main} extends earlier results on stability of the inverse resonance problems for Schr\"odinger and Jacobi operators \cite{MNSW10, MR2594334, MR2348728} to the case of unitary CMV operators. In this note we present a new approach to the stability of the inverse resonance problems. Unlike the earlier work our approach does not rely on a heavy machinery of the transformation operators but instead uses the Schur algorithm - a simple recursion relation that arises naturally in the context of CMV operators. We point out that this approach is not specific to CMV operators only. There is also a similar simple recursive approach to the stability result of the inverse resonance problem for Jacobi operators.

The paper is organized as follows. In Section \ref{S:prelim} we introduce the basics of CMV operators, define the main objects, and state some known facts that are central to our study. In Section \ref{S:inverse} we set the stage for the stability result and give a new proof of the inverse resonance problem for CMV operators that first appeared in \cite{MR2647154}. Section \ref{S:stability} is devoted to the stability of the inverse resonance problem and contains the proof of our main Theorem \ref{main}.

Notation: In the following, we denote the set of all complex-valued sequences defined on $\bb N_0$ by $\bb C^{\bb N_0}$. The Hilbert space of all square summable complex-valued sequences is $\ell^2(\bb N_0)$ and its scalar product $\<\cdot,\cdot\>$ is linear in the second argument. Recall that the vectors $\delta_k\in\ell^2(\bb N_0)$, $k\in\bb N_0$, defined by the requirement that $\delta_k(n)$ equals Kronecker's $\delta_{k,n}$, form the standard basis in $\ell^2(\bb N_0)$. The open unit disk in the complex plane is denoted by $\bb D$.

\section{Preliminaries} \label{S:prelim}

\subsection{The CMV equations and the CMV operator}
The CMV equations are defined through a sequence of coefficients $\alpha:\bb N\to\bb D$; these are called Verblunsky coefficients. For $z\in\bb C\backslash\{0\}$ the CMV equations are the recursive equations
\begin{equation}\label{cmveq}
\binom uv(z,k)=T(z,k) \binom uv(z,k-1), \quad k\in\bb N
\end{equation}
where, using the abbreviation $\rho_k=\sqrt{1-|\alpha_k|^2}$,
$$T(z,k) = \begin{cases}
\frac{1}{\rho_{k}}
\begin{pmatrix}
\alpha_{k} & z \\ 1/z & \ol\alpha_{k}
\end{pmatrix},  & \text{$k$ odd},
\\
\frac{1}{\rho_{k}}
\begin{pmatrix}
\ol\alpha_{k} & 1 \\ 1 & \alpha_{k}
\end{pmatrix}, & \text{$k$ even}.
\end{cases}$$
It is clear that the space of solutions of these equations is $2$-dimensional. A basis of solutions is given by the sequences $\vartheta(z,\cdot)$ and $\varphi(z,\cdot)$ defined by the initial conditions
\begin{align} \label{IC}
\vartheta(z,0)=(-1,1)^\top \text{ and } \varphi(z,0)=(1,1)^\top.
\end{align}
Clearly $\vartheta(\cdot,k)$ and $\varphi(\cdot,k)$ are analytic in $\bb C\backslash\{0\}$ for any $k\in\bb N_0$.
Since
$$\ol{T(1/\ol z,k)} =
\begin{pmatrix}0 & 1 \\ 1 & 0 \end{pmatrix}
T(z,k)
\begin{pmatrix}0 & 1 \\ 1 & 0 \end{pmatrix}$$
$({\ol v},{\ol u})^\top(1/\ol z,\cdot)$ satisfies the CMV equations if $(u,v)^\top(z,\cdot)$ does. This implies, taking the initial conditions into account, that
\begin{equation} \label{varrefl}
\varphi(z,k)=\begin{pmatrix}0 & 1 \\ 1 & 0 \end{pmatrix}\ol{\varphi(1/\ol z,k)} \text{ and }
\vartheta(z,k)=-\begin{pmatrix}0 & 1 \\ 1 & 0 \end{pmatrix}\ol{\vartheta(1/\ol z,k)}
\end{equation}
whenever $z\in\bb C\backslash\{0\}$.

To define the CMV operator set first
$$\Theta_k = \begin{pmatrix} -\alpha_k & \rho_k \\ \rho_k & \ol\alpha_k\end{pmatrix}.$$
These blocks are then used to define $W=\bigoplus_{k=1}^\infty \Theta_{2k-1}$ and $V=1\oplus\left(\bigoplus_{k=1}^\infty \Theta_{2k}\right)$ where $1$ is interpreted as a $1\times1$ block. Finally the product $VW$ is denoted by $U$ and is called the CMV operator. $U$, $V$, and $W$ are defined on $\bb C^{\bb N_0}$ (and map to that space). Their restrictions to $\ell^2(\bb N_0)$ are unitary operators which we denote using the same letters as the precise meaning will always be clear from the context.

The following lemma was established in \cite{MR2220038}:
\begin{lem}
Suppose $z\in\bb C\backslash\{0\}$.
$(u,v)^\top(z,\cdot)$ is a solution of the CMV equations \eqref{cmveq} if and only if
$$\begin{pmatrix}U&0\\ 0&V\end{pmatrix} \binom uv(z,\cdot)=[u(z,\cdot)+(v(z,0)-u(z,0))\delta_0] \binom z1.$$
\end{lem}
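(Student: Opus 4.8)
The plan is to unpack the single block-operator identity into its two scalar components and then match these, line by line, against the transfer-matrix recursion \eqref{cmveq}, exploiting the factorization $U=VW$ together with the defining identity $\rho_k^2+|\alpha_k|^2=1$. First I would observe that the asserted equality is equivalent to the pair of scalar equations $Uu(z,\cdot)=zf$ and $Vv(z,\cdot)=f$, where $f:=u(z,\cdot)+(v(z,0)-u(z,0))\delta_0$. Since each block satisfies $\det\Theta_k=-(|\alpha_k|^2+\rho_k^2)=-1$, the operators $V$ and $W$ are invertible on $\bb C^{\bb N_0}$, so from $U=VW$ the first equation may be replaced by $Wu(z,\cdot)=zv(z,\cdot)$; this replacement is reversible, and hence it suffices to prove that the pair $\{Wu=zv,\ Vv=f\}$ is equivalent to the CMV equations.

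Next I would write both operator equations out in coordinates. Because $W=\bigoplus_{k\geq1}\Theta_{2k-1}$ couples the indices $(2j-2,2j-1)$ while $V=1\oplus\bigoplus_{k\geq1}\Theta_{2k}$ fixes index $0$ and couples $(2j-1,2j)$, the relation $Vv=f$ at index $0$ collapses to the tautology $v(z,0)=v(z,0)$ — this is exactly where the correction term $(v(z,0)-u(z,0))\delta_0$ earns its keep — whereas every other block yields two scalar relations. The clean part of the matching is that the first row of the $W$-block at $\Theta_{2j-1}$ reproduces the upper ($u$-)transfer equation at the odd index $k=2j-1$ verbatim, and the first row of the $V$-block at $\Theta_{2j}$ reproduces the lower ($v$-)transfer equation at the even index $k=2j$ verbatim.

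The main work, and the only genuinely non-trivial step, is to show that the two remaining operator relations — the second rows of the $W$- and $V$-blocks — are equivalent to the as-yet-unused transfer equations. These do \emph{not} agree termwise. Instead, one substitutes the already-matched first-row relation into the second-row relation, collects the combination $\rho_k^2+|\alpha_k|^2$ as the coefficient of the relevant term, and then invokes $\rho_k^2+|\alpha_k|^2=1$ to recover the missing transfer equation on the nose. I expect this coefficient-collapse (together with clearing the factor $1/z$ in the odd transfer matrix, which is where the hypothesis $z\neq0$ is used) to be the crux; everything else is bookkeeping of indices and parities.

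Finally, since each of these manipulations is an equivalence rather than a one-way implication and is valid for every $z\in\bb C\backslash\{0\}$, the forward and reverse directions of the asserted ``if and only if'' are established simultaneously: a sequence $(u,v)^\top(z,\cdot)$ solves the CMV equations precisely when the reconstructed operator identity holds. I would present the odd-index ($W$) computation in full and remark that the even-index ($V$) computation is entirely analogous, with the roles of the two rows interchanged.
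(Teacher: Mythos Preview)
Your proposal is correct: the reduction of the block identity to the pair $\{Wu=zv,\ Vv=f\}$ via invertibility of $V$, the verbatim matching of the first rows with one transfer equation at each parity, and the $\rho_k^2+|\alpha_k|^2=1$ collapse for the second rows all go through exactly as you describe. Note, however, that the paper does not supply its own proof of this lemma---it simply cites \cite{MR2220038}---so there is nothing to compare your argument against here; you have supplied a self-contained direct verification where the paper only gives a reference.
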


Hence if $u(z,\cdot)$ is the first component of $\varphi(z,\cdot)$ we have $Uu=zu$.

Now suppose that $|z|\neq1$ so that $z$ is in the resolvent set of the unitary operator $U$ and set $u(z,\cdot)=2z(U-z)^{-1}\delta_0$ which is in $\ell^2(\bb N_0)$. Define
\begin{equation}\label{mFunction}
m(z)=1+u(z,0)=\<\delta_0,(U+z)(U-z)^{-1}\delta_0\>
\end{equation}
which is analytic in $\bb D$, and, assuming also $z\neq0$,
$$\omega(z,\cdot)=\vartheta(z,\cdot)+m(z)\varphi(z,\cdot).$$
Both components of $\omega(z,\cdot)$ are square summable (the first component is $u$ and the second is $V^{-1}(u+2\delta_0)$). Moreover, $\omega(z,\cdot)$ and its constant multiples are the only square summable solutions of the CMV equations \eqref{cmveq} since the unitary operator $U$ can not have eigenvalues away from the unit circle. Thus, employing \eqref{varrefl}, we find that
$$-\begin{pmatrix}0 & 1 \\ 1 & 0 \end{pmatrix}\ol{\omega(1/\ol z,\cdot)}
 =\vartheta(z,\cdot)-\ol{m(1/\ol z)} \varphi(z,\cdot)$$
is equal to $\omega(z,\cdot)$ which implies that $m(z)=-\ol{m(1/\ol z)}$.

The function $m$ is called the Weyl-Titchmarsh $m$-function while the sequence $\omega(z,\cdot)$ is called the Weyl-Titchmarsh solution of the CMV equations \eqref{cmveq}. We also note that, for $z=0$ we get $u(0,\cdot)=0$, $m(0)=1$, and $v(0,\cdot)=2 \delta_0$ so that, for every $k\in\bb N_0$, the singularity of $\omega(\cdot,k)$ at $0$ is removable.

It follows from \eqref{mFunction} via the spectral theorem that
\begin{equation*}
m(z) = \oint_{\partial \bb D}\frac{\zeta+z}{\zeta-z}d\mu(\zeta)
 =\frac1{2\pi} \int_{-\pi}^\pi \frac{\e^{it}+z}{\e^{it}-z} d\tilde\mu(t),
\end{equation*}
where $d\mu$ denotes the spectral measure associated with the unitary operator $U$ and the cyclic vector $\delta_0$ and $\tilde\mu(t)=2\pi\mu(e^{it})$ gives rise to the corresponding measure on $[-\pi,\pi]$. The case $z=0$ shows that $d\mu$ is a probability measure. Since $(\e^{it}+z)/(\e^{it}-z)$ has positive real part for all $z\in\bb D$, it follows that $m$ is a Caratheodory function, that is, $m$ is analytic on $\bb D$, $m(0)=1$, and $\Re m(z)>0$ for $|z|<1$.

Employing the Neumann series for $(U-z)^{-1}$ in \eqref{mFunction} shows
$$m(z)=1+2\sum_{n=1}^\infty z^{n} \<\delta_0, U^{-n}\delta_0\>.$$
This implies $m^{(n)}(0)=2n! \<\delta_0, U^{-n}\delta_0\>$ so that
\begin{equation}\label{alpha}
m'(0)=-2\bar\alpha_1 \text{ and } m''(0)=4\bar\alpha_1^2-4\rho_1^2\bar\alpha_2.
\end{equation}

\subsection{Jost solutions, the Jost function, and resonances}
Assuming super-exponential decay of the Verblunsky coefficients, i.e.,
$$|\alpha_k|\leq \CCC\exp(-k^\gamma)$$
Jost solutions of the CMV equations were defined and investigated in \cite{MR2647154}. We repeat here briefly the most important results.

Defining
$$\zeta_k=\begin{cases}z, & \text{$k$ odd} \\ 1, & \text{$k$ even}\end{cases}$$
it was proved in \cite{MR2647154} that the Volterra-type equations
\begin{align}
F(z,k)=\binom{1}{0} - \sum_{n=k+1}^\infty
\begin{pmatrix} 0&\alpha_n\zeta_n\\ z^{n-k-1}\ol\alpha_n\zeta_{k+1}\end{pmatrix}F(z,n),
 \quad k\in\bb N_0, \label{3.7}
\end{align}
have a unique solution for any complex number $z$. Either component of $F(\cdot,k)$ is an entire function of growth order zero and, if $|z|\geq1$,
\begin{equation}\label{growthorder}
\|F(z,0)\|\leq \exp(\CCC+2K(z)\log|z|) \prod_{n=1}^\infty (1+|\alpha_n|)
\end{equation}
where $\|\cdot\|$ denotes the 2-norm in $\bb C^2$ and
$$K(z)=\lfloor(\log 2|z|^2)^{\frac{1}{\gamma-1}}\rfloor.$$
We also recall that
\begin{equation}\label{asyF}
\|F(z,k)-\binom10\|\leq \beta(z,k)\exp(\beta(z,k))
\end{equation}
where $\beta(z,k)=\sum_{n=k+1}^\infty |\alpha_n|\max\{1,|z|^{2n-1}\}$.

Setting $C_k=\prod_{j=k+1}^\infty \rho_j^{-1}$, it is straightforward to show that the sequence $\nu(z,\cdot)$ defined by
\begin{align} \label{nuF}
\nu(z,k)=2 z^{\lceil k/2 \rceil} C_k \begin{pmatrix}0&1\\1&0\end{pmatrix}^{k+1}F(z,k)
\end{align}
satisfies the CMV equations \eqref{cmveq} as does the sequence
$$\tilde \nu(z,k)=\begin{pmatrix}0&1\\1&0\end{pmatrix} \ol{\nu(1/\ol z,k)}.$$
If $|z|<1$ both components of $\nu(z,\cdot)$ are in $\ell^2(\bb N_0)$ so that $\nu(z,\cdot)$ must be a multiple of the Weyl-Titchmarsh solution $\omega(z,\cdot)$, i.e.,
$$\nu(z,\cdot)=\psi_0(z)\omega(z,\cdot)=\psi_0(z)(\vartheta(z,\cdot)+m(z)\varphi(z,\cdot)), \quad |z|<1$$
for some appropriate function $\psi_0$. Evaluating at $k=0$ using \eqref{IC} and \eqref{nuF} yields
\begin{align}
&\psi_0(z) = \frac{(-1,1)\nu(z,0)}{2} = C_0(1,-1)F(z,0), \label{psiF}\\
&\psi_0(z)m(z) = \frac{(1,1)\nu(z,0)}{2} = C_0(1,1)F(z,0),
\end{align}
hence $\psi_0$ and $\psi_0 m$ extend to entire functions. Consequently $m$ extends to a meromorphic function on $\bb C$ which we will denote by $M$ (we emphasize that $M(z)\neq m(z)$ for $|z|>1$). Examining also $\tilde\nu$ we obtain the relationships
$$\nu(z,\cdot)=\psi_0(z)(\vartheta(z,\cdot)+M(z)\varphi(z,\cdot))$$
and
$$\tilde\nu(z,\cdot)=\ol{\psi_0(1/\ol z)}(-\vartheta(z,\cdot)+\ol{M(1/\ol z)}\varphi(z,\cdot))$$
which are valid for any $z\in\bb C\backslash\{0\}$. The solutions $\nu(z,\cdot)$ and $\tilde\nu(z,\cdot)$ are called respectively the Jost solutions of the CMV equations if $|z|<1$ or $|z|>1$. The function $\psi_0$ is called the Jost function. It is an entire function of growth order zero. Its zeros are called resonances.

We end this section with the following observation. Since $\det T(z,k)=-1$ we find
$$\det(\nu(z,k),\tilde\nu(z,k))=(-1)^k \det(\nu(z,0),\tilde\nu(z,0)).$$
The asymptotic behavior of $\nu(z,k)$ and $\tilde\nu(z,k)$ as $k$ tends to infinity shows that the left hand side is equal to $4(-1)^{k+1}$. If we now pick $z$ on the unit circle so that $z=1/\ol z$ we obtain from this
\begin{equation}\label{psiReM}
1=|\psi_0(z)|^2\Re(M(z)).
\end{equation}

\subsection{The Schur algorithm}
Closely related to the concept of a Caratheodory function is that of a Schur function, that is, a function defined and analytic on $\bb D$ whose modulus never exceeds one. Indeed if $f$ is a Caratheodory function then $(f-1)/(f+1)$ is a Schur function, while $(1+g)/(1-g)$ is a Caratheodory function if $g$ is a Schur function. Note also that, by Schwarz's lemma,  $z\mapsto g(z)/z$ is a Schur functions if $g$ is a Schur function and $g(z)=0$.

For $k\in\bb N_0$ and $z\in\bb D$ we define now the functions
\begin{align} \label{nuPhi}
\Phi_{2k}(z)=\frac1z \frac{(1,0)\omega(z,2k)}{(0,1)\omega(z,2k)}
\text{ and }
\Phi_{2k+1}(z)=\frac{(0,1)\omega(z,2k+1)}{(1,0)\omega(z,2k+1)}
\end{align}
and we note that in place of $\omega$ we may as well put $\nu$ since these are multiples of each other.

The initial conditions satisfied by the Weyl-Titchmarsh solution $\omega$ show that
\begin{equation}\label{Schur}
\Phi_0(z)=\frac1z \frac{m(z)-1}{m(z)+1}
\end{equation}
which is a Schur function.

Using the CMV equations \eqref{cmveq} one may check that
$$\Phi_k(z)=\frac1z S(\alpha_k,\Phi_{k-1}(z))$$
where $S(w,\cdot)$ is the M\"obius transform
$$z\mapsto S(w,z)=\frac{z+\ol w}{1+wz}$$
which maps $\bb D$ to $\bb D$ provided $w\in\bb D$.

Next, taking \eqref{alpha} into account, one sees that $\Phi_0(0)=\frac12 m'(0)=-\bar\alpha_1$. This shows that $\Phi_1$ is a Schur function and we find, again by \eqref{alpha}, that $\Phi_1(0)=\Phi_0'(0)/(1-|\alpha_1|^2)=-\bar\alpha_2$.

Consider now the truncated sequence of Verblunsky coefficients $n\mapsto\alpha_{2N+n}$. The Jost solution for this problem is given by $k\mapsto z^{-N} \nu(z,2N+k)$. Consequently, the function $\Phi_{2N}$ plays the same role for the truncated sequence as $\Phi_0$ plays for the full sequence. Therefore we have $\Phi_{2N}(0)=-\bar\alpha_{2N+1}$ and $\Phi_{2N+1}(0)=-\bar\alpha_{2N+2}$. Thus any of the functions $\Phi_k$ is a Schur function and
\begin{equation}\label{verfromschur}
\Phi_k(0)=-\bar\alpha_{k+1}.
\end{equation}

The hyperbolic distance on $\bb D$, given by
$$d[w_1,w_2] = 2\tanh^{-1}\left|\frac{w_1-w_2}{1-\ol w_1 w_2}\right|
 =\log\frac{1+|w_1-w_2|/|1-\ol w_1 w_2|}{1-|w_1-w_2|/|1-\ol w_1 w_2|},$$
is invariant under M\"obius transforms which map $\bb D$ onto itself. Hence, employing the triangle inequality and the fact that, for $|z|<1$, we have $d[z\Phi_k(z),0]\leq d[\Phi_k(z),0]$,
$$d[\Phi_{k-1}(z),0]=d[z\Phi_k(z),\ol\alpha_k]\leq d[\Phi_{k}(z),0]+d[0,\ol\alpha_k].$$
Inequality \eqref{asyF} combined with \eqref{nuF} and \eqref{nuPhi} implies that $\Phi_k(z)$ tends uniformly to zero as $k$ tends to infinity. Hence we may sum up the telescoping series resulting from the previous inequality to get
\begin{equation*}
d[\Phi_{0}(z),0]\leq\sum_{k=1}^\infty d[0,\ol\alpha_k]
 = \sum_{k=1}^\infty \log\frac{1+|\alpha_k|}{1-|\alpha_k|}\leq \log Q^2.
\end{equation*}
This, in turn, gives us the estimate
\begin{equation}\label{Phiest}
\frac{1+|\Phi_0(z)|}{1-|\Phi_0(z)|}\leq \prod_{k=1}^\infty \frac{1+|\alpha_k|}{1-|\alpha_k|}
\leq Q^2
\end{equation}
which holds for all $z\in\bb D$ and hence also in the closure of $\bb D$.

\section{The inverse problem} \label{S:inverse}
The main purpose of \cite{MR2647154} was to prove the following theorem.
\begin{thm}\label{T3.1}
The locations (and multiplicities) of the zeros of the Jost function, that is, the resonances, associated with the CMV equations \eqref{cmveq} determine u\-nique\-ly the Verblunsky coefficients, provided these satisfy $|\alpha_n|\leq \CCC\exp(-n^\gamma)$.
\end{thm}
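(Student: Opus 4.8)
The plan is to reduce the statement to the following assertion: if two CMV operators $U$ and $\bU$ in the class have Jost functions $\psi_0$ and $\bsi_0$ with the same zeros, counted with multiplicity, then their Verblunsky coefficients coincide. The starting point is that $\psi_0$ is entire of growth order zero. By Hadamard's factorization theorem an entire function of order zero equals a constant multiple of the canonical product formed from its zeros (the exponential factor being of degree at most the order, hence constant). Since $\psi_0$ and $\bsi_0$ share the same zeros with the same multiplicities, the two canonical products coincide, and therefore $\psi_0 = c\,\bsi_0$ for some nonzero constant $c$.

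Next I would recover the $m$-functions from the Jost functions. The identity \eqref{psiReM}, valid on the unit circle, reads $\Re M(z)=1/|\psi_0(z)|^2$, with the analogous relation holding for $\bU$. Because $|\psi_0|=|c|\,|\bsi_0|$ on $|z|=1$, these combine to give $\Re M=|c|^{-2}\,\Re\breve M$ on the circle. Now $m$ and $\breve m$ are Caratheodory functions, analytic on $\bb D$ with $m(0)=\breve m(0)=1$, so $h=m-|c|^{-2}\breve m$ is analytic on $\bb D$ with $\Re h=0$ on the boundary. By the maximum principle for harmonic functions $\Re h\equiv 0$ on $\bb D$, whence $h$ is a purely imaginary constant; evaluating at $z=0$ gives $1-|c|^{-2}=h(0)\in i\bb R$, which forces $|c|=1$ and $m=\breve m$.

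The last step uses the Schur algorithm to convert the equality of $m$-functions into equality of the coefficients. From \eqref{Schur} the identity $m=\breve m$ yields $\Phi_0=\breve\Phi_0$. The recursion $\Phi_k(z)=\frac1z S(\alpha_k,\Phi_{k-1}(z))$ together with the reading-off formula \eqref{verfromschur}, namely $\Phi_k(0)=-\ol\alpha_{k+1}$, allows one to run the algorithm forward: $\Phi_0$ determines $\alpha_1=-\ol{\Phi_0(0)}$, then $\Phi_1=\frac1z S(\alpha_1,\Phi_0)$, then $\alpha_2=-\ol{\Phi_1(0)}$, and so on. A straightforward induction shows $\Phi_k=\breve\Phi_k$ and $\alpha_{k+1}=\bal_{k+1}$ for every $k$, which is exactly the claimed uniqueness.

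The step I expect to be the main obstacle is the passage from $\psi_0$ to $m$ in the second paragraph, precisely the treatment of zeros of $\psi_0$ on the unit circle. Such zeros correspond to eigenvalues of $U$ and to atoms of the spectral measure, at which $\Re M$ is not a bounded boundary density and \eqref{psiReM} degenerates. To make the harmonic-function argument rigorous one must work away from these finitely many points and check that the point masses scale in the same way as the absolutely continuous part when $\psi_0$ is multiplied by $c$, so that the probability normalization $m(0)=1$ still produces $|c|=1$; everything else is routine once $m=\breve m$ is in hand.
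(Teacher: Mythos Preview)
Your argument is correct and follows the same skeleton as the paper's proof: Hadamard factorization to reduce to a multiplicative constant, the identity $\Re M=1/|\psi_0|^2$ on the unit circle, the normalization $m(0)=1$ to pin down that constant, and the Schur algorithm to read off the $\alpha_k$. The only packaging difference is in the middle step: the paper writes down the explicit Herglotz representation \eqref{mfrompsi} for $m$ in terms of $|\psi_0|$ and then determines $|\psi_0(0)|$ via \eqref{psi00} (and its sign via $\psi_0(0)=C_0>0$), whereas you compare two candidate $m$-functions by a maximum-principle argument. The paper's route is slightly more constructive, which it exploits later for the quantitative stability estimates, but for bare uniqueness your version is equally clean.

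Your flagged obstacle does not actually arise. The decay hypothesis gives $\sum_k|\alpha_k|<\infty$, so $\prod_k(1-|\alpha_k|)>0$ and estimate \eqref{Phiest} applies with $Q=\big(\prod_k(1-|\alpha_k|)\big)^{-1}$; this bounds $|\Phi_0|$ strictly below $1$ on $\ol{\bb D}$, hence $m$ is bounded there and $\psi_0$ has no zeros on the unit circle. Thus the boundary identity $\Re M=1/|\psi_0|^2$ is an equality of continuous functions, and your harmonic-function step goes through without any special handling of point masses.
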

The strategy in \cite{MR2647154} was to show that the zeros of $\psi_0$ determine the Weyl-Titchmarsh $m$-function. The $m$-function, in turn, determines the Verblunsky coefficients as was shown in \cite{MR2220038}. In the following we prove this fact directly, because the proof of our stability result relies on the details of the relationship between the resonances and the Verblunsky coefficients.

\begin{proof}[Proof of Theorem \ref{T3.1}]
As we know from equation \eqref{verfromschur}, the Verblunsky coefficients are determined by the Schur functions $\Phi_k$. Of these the first one is determined by $m$ while the subsequent ones are found recursively via the M\"obius transform $S$.

To find $m$ we call on Schwarz's integral formula which says that
\begin{equation}\label{schwarz}
m(z)=\frac1{2\pi}\int_{-\pi}^\pi \frac{r\e^{it}+z}{r\e^{it}-z}\Re(m(r\e^{it})) dt
\end{equation}
as long as $|z|<r<1$. According to the first inequality in \eqref{Phiest} $m=(1+z\Phi_0(z))/(1-z\Phi_0(z))$ is uniformly bounded in $\bb D$  and hence has no poles in its closure. Therefore we may take the limit $r\to1$ in \eqref{schwarz} under the integral. This fact and equation \eqref{psiReM} show that
\begin{equation*}
d\tilde\mu(t)=\Re(M(\e^{it}))dt=\frac{dt}{|\psi_0(\e^{it})|^2}
\end{equation*}
so that
\begin{equation}\label{mfrompsi}
m(z)=\frac1{2\pi}\int_{-\pi}^\pi \frac{\e^{it}+z}{\e^{it}-z} \frac{dt}{|\psi_0(\e^{it})|^2}
\end{equation}
for $|z|<1$.

Finally we have to show that $\psi_0$ is determined from its zeros (the resonances). Since it is an entire function of growth order zero Hadamard's factorization theorem gives
$\psi_0(z) =\psi_0(0) \Pi(z)$ where
$$\Pi(z)=\prod_{n=1}^\infty (1-z/z_n)$$
and where the $z_n$ are the zeros of $\psi_0$ repeated according to their multiplicities. We claim that the value $\psi_0(0)$ is also determined by the $z_n$. Indeed, evaluating \eqref{mfrompsi} at $z=0$, using that $m(0)=1$, gives
\begin{equation}\label{psi00}
|\psi_0(0)|^2=\frac1{2\pi} \int_{-\pi}^\pi \frac{dt}{|\Pi(\e^{it})|^2}.
\end{equation}
This will prove our claim if we can show that $\psi_0(0)$ is positive. To this end we note that $F(0,k)=(1,0)^\top$ is the unique solution of the Volterra-type equations \eqref{3.7} for $z=0$. Thus
\begin{equation}\label{C0}
\psi_0(0)=C_0=\prod_{j=1}^\infty \rho_j^{-1}\geq1.
\end{equation}
\end{proof}

\section{Stability} \label{S:stability}
In this section we prove Theorem \ref{main}. Throughout the section we assume that any sequence of Verblunsky coefficients is from the class $B_0(\gamma,\CCC,Q)$. Subsequently we will be using repeatedly the following elementary facts:
\begin{enumerate}
  \item If $|x|\leq 1/2$, then $|\log(1-x)|\leq2|x|$.
  \item $|\e^u-1|\leq |u| \e^{|u|}$ for all $u\in\bb C$. Moreover, if $|u|\leq 1/2$, then $|\e^u-1|\leq 2|u|$.
\end{enumerate}

\subsection{Upper and lower bound for $\Pi$}
Upper bounds on $\Pi$ follow from \eqref{growthorder} since $\Pi(z)=(1,-1)F(z,0)$ by \eqref{psiF} and \eqref{C0}. Indeed, using that $\prod_{j=1}^\infty (1+|\alpha_j|)\leq Q$ for $\alpha\in B_0(\gamma,\CCC,Q)$, we find
\begin{equation}\label{logPi}
\log|\Pi(z)|\leq \log(\sqrt2Q)+\CCC+(\log2|z|^2)^{\gamma/(\gamma-1)}
\end{equation}
as long as $|z|\geq1$.

We need lower bounds on $\Pi$ only on the unit circle. Recall that by \eqref{psiReM} we have $\Re(M(z))=|\psi_0(z)|^{-2}=C_0^{-2}|\Pi(z)|^{-2}$ if $|z|=1$. Note also that
$$\Re(M(z))=\frac{1-|z\Phi_0(z)|^2}{|1-z\Phi_0(z)|^2}\leq \frac{1+|\Phi_0(z)|}{1-|\Phi_0(z)|}.$$
Combining these facts with \eqref{C0} and \eqref{Phiest} gives
\begin{equation}\label{lowerbound}
|\Pi(z)|^{-2}\leq C_0^2 \prod_{n=1}^\infty \frac{1+|\alpha_n|}{1-|\alpha_n|}
=\prod_{n=1}^\infty \frac{1}{(1-|\alpha_n|)^2}\leq Q^2.
\end{equation}

\subsection{Distribution of resonances} \label{S:distresonances}
Now let $N(r)$ denote the number of zeros of $\Pi$ in the open disk of radius $r$ centered at zero. We know that $N(r)=0$ for $r\leq 1$. To deal with $r\geq1$ we use Jensen's formula, the estimate \eqref{logPi}, and the inequality $2(a+b)^p \leq (2a)^p+(2b)^p$, which holds for $a,b\geq0$ and $p\geq1$, to find
$$N(r)\leq \int_r^{er} \frac{N(t)}t dt\leq \int_0^{er} \frac{N(t)}t dt
 =\frac1{2\pi} \int_0^{2\pi} \log|\Pi(\e r\e^{it})|dt
 \leq A_1+\frac{(\log r^4)^p}2$$
where $p=\gamma/(\gamma-1)$ and $A_1$ is a suitable constant which depends only on $Q$, $\CCC$, and $\gamma$.
From this we obtain
$$\sum_{|z_n|\geq R}\frac{1}{|z_n|}
 =\int_R^\infty \frac{dN(t)}{t}=-\frac{N(R)}R +\int_R^\infty \frac{N(t)}{t^2}dt
 \leq \frac{A_1}{R}+\frac{4^p}{2}\Gamma(p+1,\log R)$$
where $\Gamma$ denotes the incomplete Gamma function \cite[Sect.\ 6.5]{MR0167642}. In particular, we get
$$\sum_{n=1}^\infty\frac1{|z_n|}\leq A_1+\frac{4^p}{2}\Gamma(p+1)$$
by setting $R=1$. The asymptotic behavior of the incomplete Gamma function \cite[Eq.\ 6.5.32]{MR0167642} shows now that
\begin{equation} \label{zninv}
\sum_{|z_n|\geq R}\frac{1}{|z_n|} \leq A_2 \frac{(\log R)^p}R
\end{equation}
if $R\geq 2$ and $A_2$ is a suitable constant (depending on $Q$, $\CCC$, and $\gamma$).

Next we estimate how close resonances can be to the unit circle. From \eqref{logPi} (making use of the maximum principle) we know that there is a constant $L$ depending only on $Q$, $\CCC$, and $\gamma$ such that $|\Pi(z)|\leq L$ whenever $|z|\leq \e$.
Cauchy's estimate gives $|\Pi'(a)|\leq L/(\e-|a|)$ for any point $a$ with $|a|<\e$. Let $1+\delta=(QL+\e)/(QL+1)$ and $z_0$ a point on the unit circle. Then
$$|\Pi(t z_0)|\geq |\Pi(z_0)|-\int_1^t |\Pi'(z_0s)|ds\geq \frac1Q-(t-1)\frac{L}{\e-t}.$$
Since this is positive as long as $1\leq t<1+\delta$ we have established that there are no zeros of $\psi_0$, i.e., no resonances, in the disk $|z|<1+\delta$ for any operator $U$ from the class $B_0(\gamma,\CCC,Q)$.

\subsection{Comparing $\Pi$ and $\breve\Pi$}
We assume now that we have two CMV operators $U$ and $\bU$ with Verblunsky coefficients $\alpha_n$ and $\bal_n$, respectively. More generally, any quantity associated with $\bU$ will have a $\breve{}$ accent to distinguish it from the corresponding quantity associated with $U$. Both $\alpha$ and $\bal$ are in $B_0(\gamma,\CCC,Q)$.

Our goal is to show that the differences $|\alpha_n-\bal_n|$ are arbitrarily small provided that the resonances of the associated operators $U$ and $\bU$ in a sufficiently large disk (of radius $R$) are respectively $\varepsilon$-close for a sufficiently small $\varepsilon$ , i.e., to prove Theorem \ref{main}.

We will henceforth always assume $R\geq2$ and $\varepsilon\leq\delta/2$. We begin by looking at those factors in $\Pi$ and $\breve\Pi$ associated with resonances which are respectively close to each other, i.e., the resonances in a disk of radius $R$. Let $N$ be their number and assume $|z|=1$. Thus $|\bz_n-z_n|\leq\varepsilon$ and $|\bz_n-z|,|z_n-z|\geq\delta>0$ for $1\leq n\leq N$. Since $\varepsilon/\delta\leq 1/2$ we have the following estimate
\begin{align}
  \left|\log\left(\prod_{n=1}^N \frac{1-z/\bz_n}{1-z/z_n}\right)\right|
  \leq \sum_{n=1}^N \left|\log\left(1-z\frac{z_n-\bz_n}{(z_n-z)\bz_n}\right)\right|
  \leq \frac{2\varepsilon}{\delta} \sum_{n=1}^\infty |\bz_n|^{-1}. \label{smallresonances}
\end{align}
We showed above that the sum on the right is bounded by $A_1+\Gamma(p+1)/2$.

Next we turn to the terms associated with large resonances and show that these are negligible. Indeed, we get for $|z_n|, |\bz_n|\geq R\geq 2$ and $|z|=1$
$$\left|\log\prod_{n=N+1}^\infty \frac{1-z/\bz_n}{1-z/z_n}\right| \leq 2\sum_{n=N+1}^\infty\left(\frac{1}{|\bz_n|}+\frac1{|z_n|}\right)$$
so that, with the aid of \eqref{zninv}, we arrive at the estimate
$$\left|\log\prod_{n=N+1}^\infty \frac{1-z/\bz_n}{1-z/z_n}\right|
\leq 4A_2 \frac{(\log R)^p}R.$$

Combining this estimate with \eqref{smallresonances} and denoting by $A_3$
a suitable constant depending only on $Q$, $\CCC$, and $\gamma$, we obtain
\begin{equation}\label{qPi}
\left|\frac{\breve\Pi(z)}{\Pi(z)}-1\right|\leq A_3 \left(\varepsilon+\frac{(\log R)^p}R\right)
\end{equation}
provided that $|z|\leq1$.

\subsection{Comparing $\psi_0$ and $\bsi_0$}
Since, by \eqref{qPi} and \eqref{lowerbound},
$$\left|\frac1{|\Pi(\e^{it})|^2}-\frac1{|\breve\Pi(\e^{it})|^2}\right|\leq 2Q^2 A_3\bigg(\varepsilon+\frac{(\log R)^p}R\bigg)$$
and since $C_0,\breve C_0\geq1$ we get from \eqref{psi00} that
\begin{equation*}
|C_0^{-2}-\breve C_0^{-2}|\leq|C_0^{2}-\breve C_0^{2}|
 \leq 2Q^2 A_3(\varepsilon+(\log R)^p/R).
\end{equation*}
Thus, whenever $|z|\leq1$,
\begin{equation}\label{diffpsipsi}
\left|\frac1{|\psi_0(z)|^2}-\frac1{|\bsi_0(z)|^2}\right|
\leq A_4 \bigg(\varepsilon+\frac{(\log R)^p}R\bigg)
\end{equation}
where $A_4$ depends only on $Q$, $\CCC$, and $\gamma$.

\subsection{Comparing Verblunsky coefficients}
Suppose $1-|w|, 1-|\breve w| \geq 1/Q$ and $z,\breve z$ are in the closed unit disk. Then
$$|S(w,z)-S(\breve w, \breve z)| \leq Q^2 (4|w-\breve w|+2|z-\breve z|).$$
Since by assumption $1-|\alpha_n|\geq\prod_{j=1}^\infty (1-|\alpha_j|)\geq 1/Q$,
it follows that for all $z$ on the unit circle,
\begin{align} \label{PointwiseEstimate}
|\Phi_k(z)-\breve\Phi_k(z)| \leq Q^2 (4|\alpha_k-\bal_k|+2|\Phi_{k-1}(z)-\breve\Phi_{k-1}(z)|).
\end{align}

Let $\|\cdot\|_p$ denote the $L^p$-norm on the unit circle with respect to the normalized Lebesgue measure. By Gauss's mean value theorem
$$
|\alpha_k-\bal_k| \leq \|\Phi_{k-1}-\breve\Phi_{k-1}\|_1, \quad k\in\bb N,
$$
and hence \eqref{PointwiseEstimate} yields,
\begin{align*}
\|\Phi_k-\breve\Phi_k\|_1 \leq 6Q^2 \|\Phi_{k-1}-\breve\Phi_{k-1}\|_1, \quad k\in\bb N.
\end{align*}
Thus we have
\begin{align*}
|\alpha_k-\bal_k| \leq (6Q^2)^{k-1} \|\Phi_0-\breve\Phi_0\|_1, \quad k\in\bb N,
\end{align*}
by induction. Since for all $z$ on the unit circle $\Re M(z) = 1/|\psi_0(z)|^2$ and  $\Re \breve M(z) =1/|\breve\psi_0(z)|^2$ are nonnegative, it follows from \eqref{Schur} that $\|\Phi_0-\breve\Phi_0\|_1 \leq 2\|M-\breve M\|_1 \leq 2\|M-\breve M\|_2$. The imaginary parts of $M$ and $\breve M$ can be obtained from the Hilbert transform of the respective real parts.
Since the Hilbert transform is unitary on the space of square integrable functions we have $\|\Im M-\Im \breve M\|_2 = \|\Re M-\Re \breve M\|_2$ and hence $\|M-\breve M\|_2 \leq \sqrt2\|\Re M-\Re\breve M\|_2$. Thus, we get from \eqref{diffpsipsi}
\begin{multline}
|\alpha_k-\bal_k| \leq 2\sqrt2(6Q^2)^{k-1} \bigg\|\frac{1}{|\psi_0(z)|^2} - \frac{1}{|\breve\psi_0(z)|^2}\bigg\|_2 \\
\leq 2\sqrt2(6Q^2)^{k-1} A_4\bigg(\varepsilon+\frac{(\log R)^p}R\bigg). \label{eq4.8}
\end{multline}

Setting $A_0=2\sqrt2 A_4/(6Q^2)$ completes the proof of Theorem \ref{main}.

Estimate \eqref{eq4.8} becomes worse with increasing $k$. Eventually, of course we will have $|\alpha_k-\bal_k|\leq 2\CCC\e^{-k^\gamma}$ just by using our hypothesis on super-exponential decay of the Verblunsky coefficients. Using the worst possible case and introducing yet another approriate constant $A_5$ gives us the uniform estimate
$$|\alpha_k-\bal_k|\leq A_5 \left(\varepsilon +\frac{(\log R)^p}R\right)^{1/\log(6\e Q^2)}, \quad k\in\bb N.$$


\begin{bibdiv}
\begin{biblist}

\bib{MR0167642}{book}{
   author={Abramowitz, Milton},
   author={Stegun, Irene A.},
   title={Handbook of mathematical functions with formulas, graphs, and
   mathematical tables},
   series={National Bureau of Standards Applied Mathematics Series},
   volume={55},
   publisher={For sale by the Superintendent of Documents, U.S. Government
   Printing Office, Washington, D.C.},
   date={1964},
   pages={xiv+1046},
   review={\MR{0167642 (29 \#4914)}},
}

\bib{MR2164835}{article}{
      author={Brown, B.~Malcolm},
      author={Naboko, Sergey},
      author={Weikard, Rudi},
       title={The inverse resonance problem for {J}acobi operators},
        date={2005},
        ISSN={0024-6093},
     journal={Bull. London Math. Soc.},
      volume={37},
      number={5},
       pages={727\ndash 737},
         url={http://dx.doi.org/10.1112/S0024609305004674},
      review={\MR{2164835 (2006e:39032)}},
}

\bib{MR2519659}{article}{
      author={Brown, B.~Malcolm},
      author={Naboko, Serguei},
      author={Weikard, Rudi},
       title={The inverse resonance problem for {H}ermite operators},
        date={2009},
        ISSN={0176-4276},
     journal={Constr. Approx.},
      volume={30},
      number={2},
       pages={155\ndash 174},
         url={http://dx.doi.org/10.1007/s00365-008-9037-8},
      review={\MR{2519659 (2011b:47065)}},
}

\bib{MR2220038}{article}{
      author={Gesztesy, Fritz},
      author={Zinchenko, Maxim},
       title={Weyl-{T}itchmarsh theory for {CMV} operators associated with
  orthogonal polynomials on the unit circle},
        date={2006},
        ISSN={0021-9045},
     journal={J. Approx. Theory},
      volume={139},
      number={1-2},
       pages={172\ndash 213},
         url={http://dx.doi.org/10.1016/j.jat.2005.08.002},
      review={\MR{2220038 (2007f:47027)}},
}

\bib{MR2851909}{article}{
      author={Iantchenko, Alexei},
      author={Korotyaev, Evgeny},
       title={Periodic {J}acobi operator with finitely supported perturbation
  on the half-lattice},
        date={2011},
        ISSN={0266-5611},
     journal={Inverse Problems},
      volume={27},
      number={11},
       pages={115003, 26},
         url={http://dx.doi.org/10.1088/0266-5611/27/11/115003},
      review={\MR{2851909}},
}

\bib{MR2860642}{article}{
      author={Iantchenko, Alexei},
      author={Korotyaev, Evgeny},
       title={Periodic {J}acobi operator with finitely supported perturbations:
  the inverse resonance problem},
        date={2012},
        ISSN={0022-0396},
     journal={J. Differential Equations},
      volume={252},
      number={3},
       pages={2823\ndash 2844},
         url={http://dx.doi.org/10.1016/j.jde.2011.09.034},
      review={\MR{2860642}},
}

\bib{IK12}{article}{
      author={Iantchenko, Alexei},
      author={Korotyaev, Evgeny},
       title={Resonances for periodic {J}acobi operators with finitely
  supported perturbations},
        date={2012},
        ISSN={0022-247X},
     journal={J. Math. Anal. Appl.},
      volume={388},
      number={2},
       pages={1239–\ndash 1253},
  url={http://www.sciencedirect.com/science/article/pii/S0022247X11010365},
}

\bib{MR2104289}{article}{
      author={Korotyaev, Evgeni},
       title={Stability for inverse resonance problem},
        date={2004},
        ISSN={1073-7928},
     journal={Int. Math. Res. Not.},
      number={73},
       pages={3927\ndash 3936},
         url={http://dx.doi.org/10.1155/S1073792804140609},
      review={\MR{2104289 (2005i:81186)}},
}

\bib{MNSW10}{article}{
      author={Marletta, Marco},
      author={Naboko, Sergey},
      author={Shterenberg, Roman},
      author={Weikard, Rudi},
       title={On the inverse resonance problem for {J}acobi operators --
  uniqueness and stability},
     journal={To appear in J. Anal. Math.},
}

\bib{MR2594334}{article}{
      author={Marletta, Marco},
      author={Shterenberg, Roman},
      author={Weikard, Rudi},
       title={On the inverse resonance problem for {S}chr\"odinger operators},
        date={2010},
        ISSN={0010-3616},
     journal={Comm. Math. Phys.},
      volume={295},
      number={2},
       pages={465\ndash 484},
         url={http://dx.doi.org/10.1007/s00220-009-0928-8},
      review={\MR{2594334 (2011a:34203)}},
}

\bib{MR2348728}{article}{
      author={Marletta, Marco},
      author={Weikard, Rudi},
       title={Stability for the inverse resonance problem for a {J}acobi
  operator with complex potential},
        date={2007},
        ISSN={0266-5611},
     journal={Inverse Problems},
      volume={23},
      number={4},
       pages={1677\ndash 1688},
         url={http://dx.doi.org/10.1088/0266-5611/23/4/018},
      review={\MR{2348728 (2008i:47065)}},
}

\bib{MR2105088}{book}{
      author={Simon, Barry},
       title={Orthogonal polynomials on the unit circle. {P}art 1},
      series={American Mathematical Society Colloquium Publications},
   publisher={American Mathematical Society},
     address={Providence, RI},
        date={2005},
      volume={54},
        ISBN={0-8218-3446-0},
        note={Classical theory},
      review={\MR{2105088 (2006a:42002a)}},
}

\bib{MR2105089}{book}{
      author={Simon, Barry},
       title={Orthogonal polynomials on the unit circle. {P}art 2},
      series={American Mathematical Society Colloquium Publications},
   publisher={American Mathematical Society},
     address={Providence, RI},
        date={2005},
      volume={54},
        ISBN={0-8218-3675-7},
        note={Spectral theory},
      review={\MR{2105089 (2006a:42002b)}},
}

\bib{MR2647154}{article}{
      author={Weikard, Rudi},
      author={Zinchenko, Maxim},
       title={The inverse resonance problem for {CMV} operators},
        date={2010},
        ISSN={0266-5611},
     journal={Inverse Problems},
      volume={26},
      number={5},
       pages={055012, 10},
         url={http://dx.doi.org/10.1088/0266-5611/26/5/055012},
      review={\MR{2647154 (2011j:47098)}},
}

\end{biblist}
\end{bibdiv}

\end{document}